\documentclass[a4paper,12pt]{article}
\setlength{\textwidth}{16cm}
\setlength{\textheight}{23cm}
\setlength{\oddsidemargin}{0mm}
\setlength{\topmargin}{-1cm}

\usepackage{latexsym}
\usepackage{amsmath}
\usepackage{amssymb}
\usepackage{enumerate}
\usepackage{tikz}
\usepackage{color}

\usepackage{theorem}
\newtheorem{theorem}{Theorem}[section]
\newtheorem{proposition}[theorem]{Proposition}
\newtheorem{lemma}[theorem]{Lemma}
\newtheorem{corollary}[theorem]{Corollary}

\theorembodyfont{\rmfamily}
\newtheorem{proof}{\textmd{\textit{Proof.}}}

\newtheorem{remark}[theorem]{Remark}

\newtheorem{definition}[theorem]{Definition}

\newtheorem{acknowledgement}{\textmd{\textit{Acknowledgements.}}}

\makeatletter

\@addtoreset{equation}{section}
\makeatother

\newcommand{\qedd}{\hfill \Box}
\newcommand{\ve}{\varepsilon}

\newcommand{\B}{\ensuremath{\mathbb{B}}}

\newcommand{\R}{\ensuremath{\mathbb{R}}}

\newcommand{\Sph}{\ensuremath{\mathbb{S}}}

\newcommand{\bb}{\ensuremath{\mathbf{b}}}

\def\vol{\mathop{\mathrm{vol}}\nolimits}

\title{On the existence of convex functions on Finsler manifolds
\footnote{
Mathematics Subject Classification (2010)\,:\,53C60, 53C22.}
\footnote{
Keywords: Finsler manifolds, Berwald manifolds,
 distance function.}
}
\author{Sorin V. SABAU and Pattrawut CHANSANGIAM}
\date{}
\pagestyle{plain}

\begin{document}

%%%%%%%%%%%%%%%%%%%%%%%%%%%%%%%%%%%%%%%%%%%%%%%%%%%

\maketitle

\begin{abstract}
We show that  a non-compact (forward) complete Finsler manifold whose Holmes-Thompson volume is finite admits no non-trivial convex functions. We apply this result to some Finsler manifolds whose Busemann function is convex. %We also discuss the Busemann-Haussdorf volume for comparison. 
%study the conditions when a Finsler manifolds admits a convex function.
%TO ADD MORE HERE.
\end{abstract}

%%%%%%%%%%%%%%%%%%%%%%%%%%%%%%%%%%%%%%%%%%%%%%%

\section{Introduction}

\quad Finsler manifolds are a natural generalization of Riemannian ones in the sense that the metric depends not only on the point, but on the direction as well. This generalization implies the non-reversibility of geodesics, the difficulty of defining angles and many other particular features that distinguish them from Riemannian manifolds. Even though classical Finsler geometry was mainly concerned with the local aspects of the theory, recently a great deal of effort was made to obtain global results in the geometry of Finsler manifolds (\cite{BCS}, \cite{Oh}, \cite{Sa}, \cite{ST} and many others).

In a previous paper \cite{SaS}, by extending the results in \cite{GS}, we have studied the geometry and topology of Finsler manifolds that admit convex functions, showing that such manifolds are subject to some topological restrictions. %Despite some differences with the Riemannian case, one can obtain very precise information on the topology of Finsler manifolds admitting a convex function.
 We recall that a function $f:(M,F)\to \R$, defined on a (forward) complete Finsler manifold $(M,F)$, is called {\it convex} if and only if along every geodesic $\gamma:[a,b]\to M$, the composed function  $\varphi:=f\circ\gamma:[a,b]\to \R$ is convex, that is
\begin{equation}
f\circ\gamma[(1-\lambda)a+\lambda b]\leq (1-\lambda) f\circ \gamma (a) +\lambda f\circ\gamma (b), \quad 0\leq \lambda \leq 1.
\end{equation}
If the above inequality is strict for all geodesics $\gamma$, the function $f$ is called {\it strictly convex}, and if the equality holds good for all geodesics $\gamma$, then $f$ is called {\it linear}. A function $f:M\to \R$ is called {\it locally non-constant} if it is non-constant on any open subset $U$ of $M$, and {\it locally constant} otherwise. We are interested in locally non-constant convex functions on $M$.

It can be easily seen that any non-compact smooth manifold $M$ always admits a complete Riemannian or Finsler metric and a non-trivial smooth function which is convex with respect to this metric (see \cite{GS} for the Riemannian case and \cite{SaS} for the Finsler case).

On the other hand, it was shown by Yau (see \cite{Y}) that in the case of a non-compact manifold $M$, endowed with an {\it a priori} given complete Riemannian metric $g$, there is no non-trivial continuous convex function on $(M,g)$ if the Riemannian volume of $M$ is  finite.

In the present paper, we are going to generalize Yau's result to the case of Finsler manifolds, namely, {\it  if the non-compact manifold $M$ is endowed with an a priori  given (forward) complete Finsler metric, what are the conditions on $(M,F)$ for the existence of non-trivial convex functions}.%  .
%address the same question for the more general case of Finsler manifolds

Recall that in the case of a Finsler manifold (we do not assume our Finsler norms to be absolute homogeneous), the induced volume is not unique as in the Riemannian case and hence several choices are available (see Section 3). The Busemann-Hausdorff and Holmes-Thompson volumes are the most well known ones.

Here is our main result.

\begin{theorem}\label{Thm: Main Theorem 1}%[Main Theorem]
	Let $(M,F)$ be a (forward) complete non-compact Finsler manifold with finite Holmes-Thompson volume. %, i.e. $Vol(M,g)<\infty$.
	 Then any convex function $f:(M,F)\to \R$ must be constant.
\end{theorem}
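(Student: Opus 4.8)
The plan is to adapt Yau's gradient-flow argument from the Riemannian setting, isolating the two points at which the Finsler structure --- and the specific choice of the Holmes--Thompson volume $\mu_{\mathrm{HT}}$ --- must be exploited. First I would reduce to the smooth case: a convex function is automatically locally Lipschitz (convexity along geodesics gives one-sided directional derivatives and local bounds), and I would replace $f$ by a smooth convex approximation, so that the gradient vector field $\nabla f=\cL^{-1}(df)$ (the Legendre dual of $df$, for which $F(\nabla f)=F^*(df)$) is a genuine vector field on the open set $U=\{df\neq 0\}$. The argument is then by contradiction: assuming $f$ non-constant, $U$ is nonempty (otherwise $f$ is constant on the connected manifold $M$), so I fix $p\in U$ with $F(\nabla f(p))=2\delta$ and a small forward ball $B\ni p$ on which $F(\nabla f)\geq\delta$.

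The first Finsler ingredient is the monotonicity of the gradient norm along the gradient flow $\phi_t$ of $\nabla f$. Here I would compute $\tfrac{d}{dt}F(\nabla f)^2$ along $\phi_t$ and show it equals $2\,\nabla^2 f(\nabla f;\nabla f,\nabla f)$, the crucial simplification being that the Cartan tensor terms drop out because the reference vector and the differentiation direction both equal $\nabla f$ (the Cartan tensor is null in its reference direction). Convexity --- which says exactly that $(f\circ\gamma)''\geq 0$ for every geodesic, i.e.\ $\nabla^2 f(v;v,v)\geq 0$ for all $v$ --- then forces this derivative to be $\geq 0$, so $F(\nabla f)$ is non-decreasing along $\phi_t$ and stays $\geq\delta$ on the whole forward flow-out of $B$. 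Reparametrising to unit $f$-speed via $X=\nabla f/F(\nabla f)^2$, so that $f\circ\psi_s=f+s$ along its flow $\psi_s$ and $F(X)\leq 1/\delta$, forward completeness together with the Finsler Hopf--Rinow theorem guarantees (bounded forward speed keeps finite-time orbits in a relatively compact set) that $\psi_s$ exists for all $s\geq 0$ from $B$. Since $f\circ\psi_s=f+s$, the oscillation bound $\omega:=\max_B f-\min_B f$ makes the translates $\psi_{s_k}(B)$, with $s_k=k(\omega+1)$, pairwise disjoint, because their $f$-ranges $[\,m+s_k,\,M_0+s_k\,]$ are disjoint intervals.

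The decisive step --- and the one I expect to be the main obstacle --- is to show that this flow does not decrease Holmes--Thompson volume, so that $\mu_{\mathrm{HT}}(\psi_{s_k}(B))\geq c_0>0$ uniformly in $k$; summing over the disjoint translates then gives $\mu_{\mathrm{HT}}(M)\geq\sum_k c_0=\infty$, contradicting finiteness and finishing the proof. Concretely this amounts to controlling $\div_{\mu_{\mathrm{HT}}}(\nabla f)=\Delta_{\mu_{\mathrm{HT}}}f$, i.e.\ to proving that a convex function is subharmonic for the Holmes--Thompson Laplacian. This is genuinely delicate in the Finsler category: convexity only bounds the \emph{diagonal} Hessian $\nabla^2 f(v;v,v)$, whereas the Laplacian is a \emph{trace} $\sum_i\nabla^2 f(\nabla f;e_i,e_i)$ together with an $S$-curvature term depending on the chosen volume. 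I expect this is precisely where the Holmes--Thompson volume is indispensable: its description as the push-forward of the canonical symplectic (Liouville) volume on $T^*M$ should render the relevant divergence non-negative --- or make the flux through the level sets $\{f=c\}$ monotone --- for reasons that fail for the Busemann--Hausdorff volume. Establishing this subharmonicity, or an integrated co-area version of it sufficient to bound $\mu_{\mathrm{HT}}(\psi_{s_k}(B))$ below, is the technical heart of the theorem; the remaining bookkeeping --- the regularity reduction, completeness of the flow, and disjointness --- is comparatively routine.
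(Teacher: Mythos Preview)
Your proposal has a genuine gap at precisely the point you flag as ``the technical heart'': the claim that the (reparametrised) gradient flow of $f$ does not decrease Holmes--Thompson volume, equivalently that $\Delta_{\mu_{\mathrm{HT}}}f\ge 0$. You correctly observe that Finsler convexity only yields $\nabla^2 f(v;v,v)\ge 0$ for every $v$, which says nothing about the off-diagonal entries of $\nabla^2 f(\nabla f;\cdot,\cdot)$, let alone about the $S$-curvature correction coming from the choice of volume. There is no known mechanism by which the Holmes--Thompson choice alone forces this trace to be non-negative for an arbitrary convex $f$, and your hope that the symplectic description of $\mu_{\mathrm{HT}}$ will make the divergence non-negative is not substantiated. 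As it stands, the argument stalls here; the remaining steps (regularisation, monotonicity of $F(\nabla f)$ along the flow, disjointness of translates) are fine but cannot conclude without this volume bound.

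The paper avoids this difficulty entirely by working on the unit tangent bundle $SM$ with the \emph{geodesic} flow rather than on $M$ with the gradient flow. The point is that the Liouville volume $dV_\omega$ on $SM$ is \emph{automatically} preserved by the geodesic flow (this is the Finsler Liouville theorem, an immediate consequence of $\phi_t^*d\omega=d\omega$), and the Holmes--Thompson volume of $M$ is, up to a dimensional constant, exactly the Liouville volume of $SM$. Thus finite $\mu_{\mathrm{HT}}(M)$ gives a finite invariant measure on $SM$, and Poincar\'e recurrence produces a dense set of directions $u\in SM$ whose geodesic $\gamma_u$ returns arbitrarily close to its starting point along a sequence $t_i\to\infty$. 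For such a recurrent geodesic, convexity of $f\circ\gamma_u$ together with $\limsup f\circ\gamma_u(t_i)\le f\circ\gamma_u(0)$ forces $f\circ\gamma_u$ to be constant; by density and continuity, $f$ is constant along \emph{every} geodesic, hence locally constant, hence constant. No subharmonicity is needed, and this is exactly why the Holmes--Thompson volume is the right hypothesis: it is the projection of the canonical invariant measure for the geodesic flow, not the gradient flow.
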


Since all volume forms are bi-Lipschitz equivalent in the absolute homogeneous case (see for instance \cite{BBI}), then the result above holds good for any Finslerian volume, that is we have %it follows
\begin{corollary}%[STILL NOT GOOD FORMULATION]
	Let $(M,F)$ be an absolute homogeneous complete non-compact Finsler manifold endowed with a Finslerian volume measure.
	%	Let $(M,F)$ be a (forward)  Finsler manifold with finite Holmes-Thompson volume. %, i.e. $Vol(M,g)<\infty$.
	If the Finsler volume of $(M,F)$ is finite, then any convex function $f:(M,F)\to \R$ must be constant.
\end{corollary}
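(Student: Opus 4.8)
The plan is to deduce this directly from Theorem~\ref{Thm: Main Theorem 1} by comparing the given Finslerian volume with the Holmes--Thompson volume. Since finiteness of the Holmes--Thompson volume is the only metric hypothesis entering Theorem~\ref{Thm: Main Theorem 1}, it suffices to show that finiteness of \emph{any} Finslerian volume forces finiteness of the Holmes--Thompson one; the conclusion then follows verbatim from the Main Theorem, which applies because $(M,F)$ is forward complete and non-compact.

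First I would make the comparison precise. Fix a coordinate chart and write each Finslerian volume as a smooth positive density times the coordinate Lebesgue measure $dx = dx^1\wedge\cdots\wedge dx^n$, where $n=\dim M$. In the absolute homogeneous case the indicatrix $I_x=\{v\in T_xM : F(x,v)\le 1\}$ is, at every point $x$, a \emph{symmetric} convex body, and the densities of the standard Finslerian volumes (Busemann--Hausdorff, Holmes--Thompson, and so on) are expressed through the Euclidean volumes of $I_x$ and of its polar dual $I_x^\ast$; concretely the ratio of the Holmes--Thompson to the Busemann--Hausdorff density equals $\vol(I_x)\,\vol(I_x^\ast)/\omega_n^2$, with $\omega_n=\vol(B^n)$. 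The Blaschke--Santal\'o inequality together with a lower bound of Mahler/Bourgain--Milman type gives a two-sided estimate for the product $\vol(I_x)\,\vol(I_x^\ast)$ by constants depending only on $n$. This is precisely the bi-Lipschitz equivalence of all these volume forms recorded in \cite{BBI}.

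Denoting by $\mu$ the given Finslerian volume and by $\mu_{HT}$ the Holmes--Thompson volume, the comparison above yields constants $0<c\le C$, depending only on $n$, with
\[
c\,d\mu_{HT}\ \le\ d\mu\ \le\ C\,d\mu_{HT}
\]
pointwise on $M$. Integrating the left-hand inequality gives $c\,\mu_{HT}(M)\le\mu(M)$, so the hypothesis $\mu(M)<\infty$ forces $\mu_{HT}(M)\le c^{-1}\mu(M)<\infty$. Theorem~\ref{Thm: Main Theorem 1} now applies to $(M,F)$ and shows that every convex function $f:(M,F)\to\R$ is constant.

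The deduction itself is routine, and the only non-trivial input is the uniform two-sided density bound; this is exactly where absolute homogeneity is used. For a symmetric indicatrix the polar dual is well controlled and the Blaschke--Santal\'o/Mahler bounds are available, whereas for a genuinely non-reversible norm the comparison between, say, the Busemann--Hausdorff and Holmes--Thompson densities need not hold with constants independent of the point. Thus the substantive point is not the argument but the observation that reversibility is precisely the hypothesis which makes the choice of Finslerian volume immaterial for the finiteness assumption.
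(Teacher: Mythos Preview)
Your proof is correct and follows exactly the paper's route: the paper simply remarks, just before stating the corollary, that in the absolute homogeneous case all Finslerian volume forms are bi-Lipschitz equivalent (citing \cite{BBI}) and then invokes Theorem~\ref{Thm: Main Theorem 1}. You actually supply the mechanism behind that citation---the Blaschke--Santal\'o and Bourgain--Milman bounds on $\vol(I_x)\,\vol(I_x^\ast)$ for a symmetric indicatrix---which is more detail than the paper itself gives.
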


Our present results show that there are many topological restrictions on (forward) complete non-copmact Finsler manifolds with infinite Holmes-Thompson volume. Indeed, the topology of Finsler manifolds admitting convex functions was studied in detail in \cite{SaS}, hence the topological structure stated in the main three theorems in \cite{SaS} hold good for  (forward) complete non-copmact Finsler manifolds with infinite Holmes-Thompson volume.

\bigskip

%%%%%%%%%%%%%%%%%%%%%%%%%%%%%%%%%%%%
Here is the structure of the paper.

In Section \ref{sec: Finsler manifolds} we recall the basic setting of Finsler manifolds $(M,F)$. In special, we present here the properties of the Riemannian volume of the indicatrix $SM$ and the invariance of this volume under the geodesic flow of $F$.

In Section \ref{sec: Finslerian volumes} we introduce the Busemann-Hausdorff and the Holmes-Thompson volumes of a Finsler manifold $(M,F)$, respectively, and point out the relation with the volume of the indicatrix. In particular, if the Holmes-Thompson volume of $(M,F)$ is finite, then the total measure of the indicatrix $SM$ is also finite (Proposition \ref{prop: finite HT vol and SM}).

Section \ref{sec: proof Thm 1.1} is where we prove Theorem \ref{Thm: Main Theorem 1} by making use of Lemmas \ref{lem: Key lemma}, \ref{lem: cyclic lim}, \ref{lem: L dense set}. In the proof of Lemma \ref{lem: L dense set} we use the Poincar\'e recurrence theorem (\cite{Po}).
	%that the set of uniformly recurrent vectors is of full measure, fact that follows from the famous Birkhoff ergodic theorem (see \cite{BBE} and references within).

Finally, in Section \ref{sec:Corollaries}, we apply Theorem \ref{Thm: Main Theorem 1} to the case of complete Berwald spaces of non-negative flag curvature and obtain that these kind of spaces must have infinite Holmes-Thompson volume (Corollary \ref{cor: Berwald vol infinite}). More generaly, a (forward) complete Finsler manifold of non-negative flag curvature whose Finsler-Minkowski norm $F_x$ is 2-uniformly smooth, at each point $x\in M$, must also have infinite Holmes-Thompson volume (Corollary \ref{cor:general case of infinte volume}).
%%%%%%%%%%%%%%%%%%%%%%%%%%%%%%%%%%%%%%%%%%%%%%%%%%%%%%%%

\begin{acknowledgement}
	We are greatful % and
	 to Prof. N. Innami and Prof. S. Ohta for their suggestions that have improved the quality of the paper. We also thank to Prof. H. Shimada for many useful discussions.
\end{acknowledgement}

%%%%%%%%%%%%%%%%%%%%%%%%%%%%%%%%%%%%%%%
\section{Finsler manifolds}\label{sec: Finsler manifolds}
Let $(M,F)$ be a (connected) $n$-dimensional Finsler manifold (see \cite{BCS} for basics of Finsler geometry).
% with the indicatrix bundle $SM:=\cup_{x\in M}S_xM$, where
%$$
%S_xM:=\{y\in T_xM : F(x,y)=1
%\}.
%$$
The fundamental function $F$ of a Finsler structure $(M,F)$ determines and it is determined by the (tangent) {\it indicatrix}, or the total space of the unit tangent bundle of $F$, namely
\begin{equation*}
SM:=\{u\in TM:F(u)=1\}=\cup_{x\in M}S_xM
\end{equation*}
which is a smooth hypersurface of the tangent space $TM$. At each $x\in M$ we also have the {\it indicatrix at x}
\begin{equation*}%\tag{1.1}
S_xM:=\{v\in T_xM \ |\  F(x,v)=1\}=\Sigma_F\cap T_xM
\end{equation*}
which is a smooth, closed, strictly convex hypersurface in
$T_xM$.

To give a Finsler structure $(M,F)$ is therefore equivalent to giving a smooth
hypersurface $SM\subset TM$ for which the canonical projection
$\pi:SM\to M$ is a surjective submersion and having the property
that for each $x\in M$, the $\pi$-fiber $S_xM=\pi^{-1}(x)$ is
strictly convex including the origin $O_x\in T_xM$.
%We point out that the strong convexity condition of $F$ implies that the fiber $S_xM$ is strictly convex, but the converse is not true (see \cite{BCS} for details on this point and a counterexample).

Recall that the geodesic spray of $(M,F)$ is the vector field $S$, on the tangent space $TM$, given by
$$
S=y^i\frac{\partial}{\partial x^i}-2G^i(x,y)\frac{\partial}{\partial y^i},
$$
where $G^i:TM\to \R$ are the spray coefficients of $(M,F)$. For any $u=(x,y)\in TM$, the geodesic flow
of $(M,F)$ is the one parameter group of $S$, i.e.
$$
\phi:(-\ve,\ve)\times U\to TM,\quad u\mapsto \phi_t(u).
$$
%such that
%$$
%\frac{d\phi_u}{dt}|_{t=0}=S_u.
%$$

The following result is well known.

\begin{lemma}[\cite{S}]\label{lem:form invariance}
	We have
\begin{enumerate}
	\item 	$$
	\frac{d}{dt}F(\phi_t(y))=dF(S_{\phi_t(y)})=0,
	$$
	that is $F(\phi_t(y))$ is constant.
	\item 	For any t, we have
	$$
	\frac{d}{dt}\Bigl[(\phi^*_t)\omega\Bigr]=\frac{1}{2}d\Bigl[
	(\phi^*_t)F^2
	\Bigr],
	$$
	where
	$
	\omega=g_{ij}(x,y)y^jdx^i$ is the Hilbert form of $(M,F)$.
\end{enumerate}

%\end{lemma}

%\begin{lemma}[\cite{S}]\label{lem:form invariance}

\end{lemma}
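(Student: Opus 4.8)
The plan is to deduce both statements from a single intrinsic identity relating the geodesic spray $S$, the Hilbert form $\omega$, and the energy $E:=\tfrac12 F^2$. First I would record the algebraic fact $g_{ij}y^j=\partial E/\partial y^i$. Writing $F^2=g_{jk}(x,y)y^jy^k$ and differentiating in $y^i$, the term $(\partial g_{jk}/\partial y^i)\,y^jy^k$ vanishes, since it equals $2C_{jki}y^jy^k$ and the Cartan tensor is transverse to $y$ (a consequence of the $0$-homogeneity of $g_{jk}$). Hence $\omega=p_i\,dx^i$ with $p_i:=g_{ij}y^j=\partial E/\partial y^i$, and in particular $\iota_S\omega=\omega(S)=p_iy^i=g_{ij}y^iy^j=F^2$, using $dx^i(S)=y^i$.

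The key identity I would establish is $\iota_S\,d\omega=-dE$, the intrinsic (symplectic) form of the geodesic equations. Granting this, statement (1) is immediate: from $dE=F\,dF$ we get $dF(S)=dE(S)/F=\iota_S(dE)/F=-\iota_S\iota_S(d\omega)/F=0$, because contracting the $2$-form $d\omega$ twice with the same vector field gives zero. For statement (2), I would use $\frac{d}{dt}(\phi_t^*\omega)=\phi_t^*(\cL_S\omega)$ together with the fact that $d$ commutes with pullback, so it suffices to prove $\cL_S\omega=\tfrac12 d(F^2)=dE$. Cartan's formula gives $\cL_S\omega=\iota_S\,d\omega+d(\iota_S\omega)$, and substituting $\iota_S\omega=F^2$ and $\iota_S\,d\omega=-dE$ yields $\cL_S\omega=-dE+d(F^2)=dE$, as required.

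It remains to prove the master identity $\iota_S\,d\omega=-dE$, which I expect to be the main point. I would expand $d\omega=\partial_{x^k}p_i\,dx^k\wedge dx^i+\partial_{y^k}p_i\,dy^k\wedge dx^i$ and contract with $S=y^k\partial_{x^k}-2G^k\partial_{y^k}$, using $dx^k(S)=y^k$ and $dy^k(S)=-2G^k$. Matching the $dy$-components of $\iota_S\,d\omega$ and $-dE$ reduces to $y^i\partial_{y^k}p_i=p_k$, which follows from the symmetry $\partial_{y^k}p_i=\partial_{y^i}p_k$ (both equal $\partial^2E/\partial y^i\partial y^k$) together with Euler's relation for the $1$-homogeneous functions $p_k$. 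Matching the $dx$-components, after using $y^k\partial_{x^i}p_k=\partial_{x^i}(y^k\partial_{y^k}E)=2\partial_{x^i}E$, reduces to $y^k\partial_{x^k}p_i-2G^k\partial_{y^k}p_i=\partial_{x^i}E$; but this is exactly the Euler--Lagrange equation for the energy, which is precisely the defining property of the spray coefficients $G^i$. The only genuine obstacle is thus bookkeeping the index contractions correctly and recognizing the Euler--Lagrange relation; the two homogeneity inputs (Euler's theorem and transversality of the Cartan tensor) do the rest.
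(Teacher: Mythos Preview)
Your argument is correct. The paper does not actually prove this lemma: it is quoted from Shen's \emph{Lectures on Finsler Geometry} \cite{S}, so there is no ``paper's own proof'' to compare against. Your route via the master identity $\iota_S\,d\omega=-dE$ together with Cartan's formula is the standard symplectic proof, and all the steps you outline (the vanishing $C_{ijk}y^j=0$, the Euler relation $y^i\partial_{y^i}p_k=p_k$, and the recognition of the $dx$-component as the Euler--Lagrange equation defining $G^i$) are exactly what is needed; this is essentially the argument one finds in Shen's book.
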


It follows
$$
(\phi^*_t)d\omega=d\omega.
$$

%%%%%%%%%%%%%%%%%%%%%%%%%%%%%%%%%%%%%%%%
\section{Finslerian volumes}\label{sec: Finslerian volumes}
In order to fix notations, we recall that the {\it Euclidean volume form} in $\R^n$, with the coordinates $(x^1,x^2,\dots,x^n)$,  is the $n$-form
\[
dV_{\R^n}:=dx^1dx^2\dots dx^n,
\]
and the {\it Euclidean volume} of a bounded open set $\Omega\subset \R^n$ is given by
\begin{equation}\label{Euclidean vol}
\textrm{Vol}(\Omega)=\textrm{Vol}_{\R^n}(\Omega)=\int_\Omega dV_{\R^n}=\int_\Omega dx^1dx^2\dots dx^n.
\end{equation}

%Obviously, if $\Omega\subset \R^n$ is a bounded open set, $Vol_{\R^n}(\Omega)$ is a finite constant.

More generally, let us consider a Riemannian manifold $(M,g)$ with the {\it Riemannian volume form}
\[
dV_g:=\sqrt{g}dx^1dx^2\dots dx^n,
\]
and hence the {\it Riemannian volume} of $(M,g)$ can be computed as
\[
\textrm{Vol}(M,g)=\int_M dV_g=\int_M \sqrt{g}dx^1dx^2\dots dx^n=\int_M \theta^1\theta^2\dots\theta^n,
\]
where $\{\theta^1,\theta^2,\dots,\theta^n\}$ is a $g$-orthonormal co-frame on $M$.

%It is also clear that the volume of a Riemannian manifold must be a real constant.

We remark that this Riemannian volume is uniquely determined by the following two properties:
\begin{enumerate}
	\item The Riemannian volume in $\R^n$ is the standard Euclidean volume \eqref{Euclidean vol}.
	\item The volume is monotone with the metric.
\end{enumerate}

On the other hand, in the Finslerian case, this is not true anymore.
Indeed, even if we ask for the Finslerian volume to satisfy the same two properties above, the volume is not uniquely defined, but depends on the choice of a positive function on $M$. More precisely, a {\it volume form} $d\mu$ on an $n$-dimensional  Finsler manifold $(M,F)$ is a global defined, non-degenerate $n$-form on $M$ written in the local coordinates $(x^1,x^2,\dots,x^n)$ of $M$ as
\begin{equation}
d\mu=\sigma(x) dx^1\wedge\dots\wedge dx^n,
\end{equation}
where $\sigma$ is a positive function on $M$ (see \cite{BBI} for details in the absolute homogeneous case).

Depending on the choice of $\sigma$
%In the Finslerian case there are
several different volume forms are known: the Busemann volume, the Holmes-Thompson volume, etc.% or the Hilbert form induced volume.

%-----------------------------------------
%In general,

%\begin{remark}[Delete later if not necessary]
%For any $y\in T_xM\setminus\{O\}$, one can define the {\it distorsion} of $(M,F,d\mu)$ by
%$$
%\tau_x(y):=\log\frac{\sqrt{\det g_{ij}(x,y)}}{\sigma(x)}
%$$

%The measure of the distorsion along the geodesics is given by the {\it S-curvature}
%$$
%S(y):=\frac{d}{dt}[\tau(\dot{\gamma}(t)) ]|_{t=0}.
%$$
%\end{remark}

%The usual Finslerian volumes are obtain by different choices of the function $\sigma(x)$.

%\bigskip

The {\it Busemann-Hausdorff} volume form is defined as
\begin{equation}
dV_{BH}:=\sigma_{BH}(x) dx^1\wedge\dots\wedge dx^n,
\end{equation}
where
\begin{equation}
\sigma_{BH}(x):=\frac{\textrm{Vol}(\B^n(1))}{\textrm{Vol}(\mathcal B^n_xM)},
\end{equation}
here $\B^n(1)$ is the Euclidean unit $n$-ball, $\mathcal B_x^nM=\{y:F(x,y)=1\}$ is the Finslerian ball and Vol the canonical Euclidean volume.

%This volume form allows us to define
The {\it Busemann-Hausdorff} volume of the Finsler manifold $(M,F)$ is defined by
$$
\textrm{vol}_{BH}(M,F)=\int_M dV_{BH}.
$$

Using the Brunn-Minkowski theory, Busemann showed in \cite{Bu1} that the Busemann-Hausdorff volume of an $n$-dimensional normed space equals its $n$-dimensional Hausdorff volume, hence the naming.

However, we point out that except for the case of absolute homogeneous Finsler manifolds, the Busemann-Hausdorff volume does not have the expected geometrical properties, and hence it is not suitable for the study of Finsler manifolds (see \cite{Pa} for a description of these properties and the main issues that appear; see also \cite{Cr} for the Berwald case when the Busemann-Hausdorff volume has some special properties).

\begin{remark}
	 Observe that the $n$-ball Euclidean volume is
		$$
		\textrm{Vol}(\mathbb B^n(1))=\frac{1}{n}\textrm{Vol}(\Sph^{n-1})=
		\frac{1}{n}\textrm{Vol}(\Sph^{n-2})\int_0^\pi \sin^{n-2}(t)dt.
		$$
	
\end{remark}
\bigskip

Another volume form naturally associated to a Finsler structure is the {\it Holmes-Thompson} volume defined by
\begin{equation}
\sigma_{HT}(x):=\frac{\textrm{Vol}(\mathcal B_x^nM,{g_x})}
{\textrm{Vol}(\mathbb B^n(1))}=
\frac{1}{\textrm{Vol}(\mathbb B^n(1))}\int_{\mathcal B_x^nM}\det g_{ij}(x,y)dy,
\end{equation}
and the  {\it Holmes-Thompson} volume of the Finsler manifold $(M,F)$ is defined as
$$
\textrm{vol}_{HT}(M,F)=\int_M dV_{HT}.
$$

This volume was introduced by Holmes and Thompson in \cite{HT1} from geometrical reasons as the dual functor of Busemann-Hausdorff volume. It has better geometrical properties than the Busemann-Hausdorff volume and hence we consider it appropiate for the study of Finsler manifolds.

\begin{remark}
	\begin{enumerate}
		\item 	If $(M,F)$ is an absolute homogeneous Finsler manifold, then
		 the Busemann-Hausdorff volume is a Hausdorff measure of $M$, and we have
		$$
		\textrm{vol}_{HT}(M,F)\leq \textrm{vol}_{BH}(M,F) .
		$$
		(see \cite{Du}).
		
	%	Moreover, in this case
%		$$
%		Vol_\omega (SM)\leq Vol(\Sph^{n-1})vol_{BH}(M,F)
%		$$
%		and we can conclude that if $vol_{BH}(M,F)$ is bounded, then the total measure of $SM$ is also bounded.
		\item If $(M,F)$ is not absolute homogeneous, then the inequality above is not true anymore. Indeed, for instance let $(M,F=\alpha+\beta)$ be a Randers space. Then, one can easily see that
		$$
		\textrm{vol}_{BH}(M,F)=\int_M (1-b^2(x))dV_\alpha\leq \textrm{vol}(M,\alpha)=\textrm{vol}_{HT}(M,F),
		$$
		where $b^2(x)=a_{ij}(x)b^ib^j$, and $\textrm{vol}(M,\alpha)$ is the Riemannian volume of $M$ (see \cite{S}).

		In the case of a smooth surface endowed with a positive defined slope metric $(M,F-\frac{\alpha^2}{\alpha-\beta})$, we have
		$$
				\textrm{vol}_{BH}(M,F)< \textrm{vol}_{HT}(M,F)< \textrm{vol}(M,\alpha),
		$$
		where $\alpha$ and $\beta$ are the same as above (see \cite{PCS1}).
		
		More generally, in the case of an $(\alpha,\beta)$, one can compute explicitely the Finslerian volume in terms of the Riemannian volume (see \cite{BCH}). Indeed, if $(M,F(\alpha,\beta))$ is an $(\alpha,\beta)$ metric on an $n$-dimensional manifold $M$, one denotes
	\begin{equation}\label{f,g}
	\begin{split}
	&f(b):=\frac{\int_0^\pi \sin^{n-2}(t)dt}{\int_{0}^\pi\frac{\sin^{n-2}(t)}{\phi(b\cos(t))^n}dt}\\
	& g(b):=\frac{\int_0^\pi\sin^{n-2}(t) T(b\cos t)dt }{\int_0^\pi \sin^{n-2}(t)dt},
	\end{split}
	\end{equation}
where $F=\alpha \phi(s)$, $s=\beta \slash \alpha$, and
$$
T(s):=\phi(\phi-s\phi')^{n-2}[(\phi-s\phi')+(b^2-s^2)\phi''].
$$

Then the Busemann-Hausdorff and Holmes-Thompson volume forms are given by
$$
dV_{BH}=f(b)dV_\alpha,\textrm{ and } dV_{HT}=g(b)dV_\alpha,
$$
respectively, where $f$ and $g$ are given by \eqref{f,g}.
		
It is remarkable that if the function $T(s)-1$ is an odd function of $s$, then $dV_{HT}=dV_\alpha$. This is the case of Randers metrics.
	\end{enumerate}

\end{remark}

%----------------------------------------

We will consider now the volume induced by the Hilbert form
$$
\omega:=g_{ij}(x,y)y^jdx^i=p_idx^i
$$
of the Finsler manifold $(M,F)$.

It follows
$$
d\omega=\frac{\partial g_{jk}}{\partial x^i}y^kdx^i\wedge dx^j-g_{ij}dx^i\wedge dy^j,
$$
and hence, we have
$$
(d\omega)^n=d\omega\wedge \dots \wedge d\omega =(-1)^\frac{n(n+1)}{2}n!\det g_{ij}(x,y)\  dx^1\wedge \dots dx^n\wedge dy^1\wedge \dots dy^n.
$$

The Hilbert form $\omega$ induces a volume form on $TM\setminus \{0\}$ defined by
$$
dV_\omega:=(-1)^\frac{n(n+1)}{2}\frac{1}{n!}(d\omega)^n=\det |g(x,y)|\ dx\wedge dy,
$$
where $\det |g(x,y)|$ is the determinant of the matrix $g_{ij}(x,y)$.

Observe that the volume of $(M,F)$ defined as
$$
\vol_\omega(M,F):=\frac{1}{\textrm{Vol}(\mathbb B^n(1))}\int_{BM}dV_\omega=\vol_{HT}(M,F),
$$
where $BM:=\{(x,y)\in TM\ : \ F(x,y)<1\}\subset TM$, is in fact the same as the HT-volume of the Finsler manifold $(M,F)$.

The following lemma is elementary.

\begin{lemma}\label{lem:HT vol and SM sympl vol}
	The following formula holds good
	\begin{equation}
\vol_{HT}(M,F)=
\frac{1}{(2n-1) {\textrm　{Vol}} (\mathbb B^n(1))}\int_{SM}dV_\omega.
	\end{equation}
\end{lemma}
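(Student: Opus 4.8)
The plan is to transfer the integral from the ball bundle $BM$ to the sphere bundle $SM=\partial BM$, exploiting that the text has already established $\vol_{HT}(M,F)=\frac{1}{\mathrm{Vol}(\B^n(1))}\int_{BM}dV_\omega$, so that it suffices to compare $\int_{SM}dV_\omega$ with $\int_{BM}dV_\omega$. The first point to settle is the meaning of $\int_{SM}dV_\omega$: since $dV_\omega$ is a $2n$-form while $\dim SM=2n-1$, the symbol must denote the $(2n-1)$-form induced on $SM$, for which the canonical choice is the contraction $\iota_E\,dV_\omega$ by the Euler (Liouville) field $E=y^i\frac{\partial}{\partial y^i}$, equivalently a fixed multiple of the contact volume form $\omega\wedge(d\omega)^{n-1}$. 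I would fix this identification at the outset.

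Next I would record the two algebraic identities that drive everything. Since $\omega=p_i\,dx^i$ annihilates vertical vectors, $\iota_E\omega=0$; and since $\omega$ is positively homogeneous of degree $1$ in $y$ (its flow is $y\mapsto e^s y$, whence $\phi_s^*\omega=e^s\omega$ and $\mathcal L_E\omega=\omega$), Cartan's formula gives $\iota_E\,d\omega=\mathcal L_E\omega-d\,\iota_E\omega=\omega$. Iterating, $\iota_E(d\omega)^n=n\,\omega\wedge(d\omega)^{n-1}$, so the induced form $\iota_E\,dV_\omega$ on $SM$ is, up to the constant $\tfrac{(-1)^{n(n+1)/2}}{n!}$ from the definition of $dV_\omega$, a multiple of $\omega\wedge(d\omega)^{n-1}$. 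Combined with the exactness $(d\omega)^n=d\big[\omega\wedge(d\omega)^{n-1}\big]$, this realizes $(d\omega)^n$, hence $dV_\omega$, as an exact $2n$-form whose primitive is a multiple of $\omega\wedge(d\omega)^{n-1}$, which is exactly the structure needed to reduce the ball integral to a boundary integral over $SM$.

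To carry out the comparison rigorously despite $M$ being non-compact, I would avoid Stokes on the non-compact $BM$ and instead argue fiberwise. The form $dV_\omega=\det|g|\,dx\wedge dy$ is of product type, so Fubini gives $\int_{BM}dV_\omega=\int_M\big(\int_{\{F(x,\cdot)<1\}}\det|g|\,dy\big)\,dx$, where each fibre ball is compact. On a fibre I would pass to polar coordinates adapted to $F$, writing $y=r\bar y$ with $F(x,\bar y)=1$ and using that $\det g_{ij}(x,y)$ is homogeneous of degree $0$ in $y$; the $y$-integral then factors as a radial integral $\int_0^1 r^{\,m}\,dr$ against an integral over the indicatrix $S_xM$, and reassembling these fibre contributions over $M$ produces $\int_{SM}$ of the induced form. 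The main obstacle, and essentially the only real content, is the exact bookkeeping of the resulting multiplicative constant: one must track the homogeneity weight $m$ through both the contraction identity $\iota_E(d\omega)^n=n\,\omega\wedge(d\omega)^{n-1}$ and the radial integration, and match the product of these factors against the normalization $(2n-1)\,\mathrm{Vol}(\B^n(1))$ fixed in the statement. A secondary but genuine subtlety is orientation: the orientations of $TM$, of $BM$, and of the boundary $SM$ must be chosen compatibly so that the sign $(-1)^{n(n+1)/2}$ is reproduced with the correct parity. Once the induced volume form on $SM$ is pinned down and these constants are tracked, substitution into $\vol_{HT}(M,F)=\frac{1}{\mathrm{Vol}(\B^n(1))}\int_{BM}dV_\omega$ yields the claimed formula.
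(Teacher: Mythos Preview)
Your approach is essentially the paper's: both reduce the identity to a fibrewise polar-coordinate computation on $T_xM\setminus\{0\}\cong(0,\infty)\times S_xM$, with the radial integral supplying the numerical factor, after which one integrates over $M$. The paper works directly with the warped-product form $G=dt^2\oplus t^2\hat G$ of the fibre metric and skips your contact-geometric preamble (the identities $\iota_E\,d\omega=\omega$, $\iota_E(d\omega)^n=n\,\omega\wedge(d\omega)^{n-1}$, and the exactness $(d\omega)^n=d[\omega\wedge(d\omega)^{n-1}]$), leaving the precise meaning of $\int_{SM}dV_\omega$ implicit where you take care to pin it down via $\iota_E\,dV_\omega$.
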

     Indeed, it is useful to observe first that
  \begin{equation}\label{ball and indic volumes}
 \int_{B_xM}dV_\omega=\frac{1}{(2n-1)}\int_{S_xM}dV_\omega.
  \end{equation}
     
   To see this, it is easy to see that, due to homogeneity, we can identify $T_xM\setminus \{0\}$ with $(0,\infty)\times S_xM$, by
   $$
   y\mapsto (F(y),\frac{y}{F(y)}).
   $$
     It follows that
     $$
     G=(dt)^2\oplus t^2 \hat{G},
     $$
     where $t\in (0,\infty)$, $G$ is the Riemannian metric of $T_xM\setminus \{0\}$, that is the Sasakian metric, and $\hat{G}$ is the restriction of $G$ to $S_xM$.
     
     Then
     $$
     \det |G|=t^{2n-2}\det |\hat{G}|,
     $$
     and hence
     $$
     \int_{B_xM}dV_\omega=\int_{B_xM}\det|G|dy=
     \int_0^1 t^{2n-2}dt\int_{S_xM}dV_\omega,
     $$
     therefore \eqref{ball and indic volumes} follows. By integrating this formula over $M$ we get the formula in Lemma \ref{lem:HT vol and SM sympl vol}.
     
     From Lemma \ref{lem:HT vol and SM sympl vol} we obtain
     \begin{proposition}\label{prop: finite HT vol and SM}
     	Let $(M,F)$ be a Finsler metric whose Holmes-Thompson volume is finite. Then
     	%\begin{enumerate}
     		the symplectic volume $\vol_\omega(SM)=\int_{SM}dV_\omega$  of $SM$ is also finite.
     	%	\item the total measure of $SM$ is bounded.
     	%\end{enumerate}
     \end{proposition}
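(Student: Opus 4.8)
The plan is to obtain the conclusion directly from Lemma \ref{lem:HT vol and SM sympl vol}, which already packages the essential computation. That lemma provides the exact identity
$$
\vol_{HT}(M,F)=\frac{1}{(2n-1)\,\textrm{Vol}(\mathbb B^n(1))}\int_{SM}dV_\omega,
$$
so the whole argument amounts to solving this for the symplectic volume of the indicatrix. First I would rewrite the identity in the equivalent form
$$
\int_{SM}dV_\omega=(2n-1)\,\textrm{Vol}(\mathbb B^n(1))\,\vol_{HT}(M,F),
$$
which is legitimate because the dimensional factor $(2n-1)\,\textrm{Vol}(\mathbb B^n(1))$ is a strictly positive finite constant depending only on $n$ (recall $\textrm{Vol}(\mathbb B^n(1))=\frac{1}{n}\textrm{Vol}(\Sph^{n-1})>0$).

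Next I would invoke the hypothesis: since by assumption $\vol_{HT}(M,F)<\infty$, the right-hand side above is a product of a finite constant with a finite quantity, and is therefore finite. Hence $\vol_\omega(SM)=\int_{SM}dV_\omega<\infty$, which is exactly the assertion. Conceptually, the point is that the two finiteness conditions are equivalent, the passage between them being controlled entirely by the dimensional normalizing constant.

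There is essentially no obstacle here, as the Proposition is merely a reformulation of Lemma \ref{lem:HT vol and SM sympl vol} under the finiteness hypothesis. The only items genuinely worth verifying are that the constant $(2n-1)\,\textrm{Vol}(\mathbb B^n(1))$ is nonzero, so that multiplying across the identity is valid, and that no sign or orientation subtlety can corrupt the finiteness conclusion. The latter is clean because $dV_\omega=\det|g(x,y)|\,dx\wedge dy$ with $\det|g(x,y)|>0$ by positive-definiteness of the fundamental tensor $g_{ij}$; thus the integrand is positive and $\int_{SM}dV_\omega$ is a well-defined nonnegative (a priori possibly infinite) quantity, making the equivalence of finiteness with that of $\vol_{HT}(M,F)$ exact rather than merely up to bounded error.
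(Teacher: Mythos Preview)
Your argument is correct and is exactly the paper's approach: the paper derives the proposition directly from Lemma~\ref{lem:HT vol and SM sympl vol}, and your proposal just spells out that the positive dimensional constant $(2n-1)\,\textrm{Vol}(\mathbb B^n(1))$ converts finiteness of $\vol_{HT}(M,F)$ into finiteness of $\int_{SM}dV_\omega$.
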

    
     We recall for later use the folowing Liouville-type theorem.
     \begin{theorem}%[Liouville]
     	The volume form $dV_\omega$ is invariant under the geodesic flow of $(M,G)$.
     \end{theorem}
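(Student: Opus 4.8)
The plan is to read off the invariance of $dV_\omega$ directly from the invariance of the two-form $d\omega$ under the geodesic flow, a fact already recorded in the relation $(\phi^*_t)\,d\omega = d\omega$ stated immediately after Lemma \ref{lem:form invariance}. Since $dV_\omega$ is manufactured from $d\omega$ purely by taking an exterior power and multiplying by a constant, and since pullback by a diffeomorphism is an algebra homomorphism of the exterior algebra, the invariance propagates from $d\omega$ to $(d\omega)^n$ with no extra geometric input. In this sense the theorem is a formal corollary of what precedes it, and the only real content sits upstream in Lemma \ref{lem:form invariance}.

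Concretely, I would proceed as follows. First recall the definition
$$
dV_\omega = (-1)^{\frac{n(n+1)}{2}}\,\frac{1}{n!}\,(d\omega)^n,
\qquad (d\omega)^n = \underbrace{d\omega\wedge\dots\wedge d\omega}_{n}.
$$
Next, for each $t$ the time-$t$ map $\phi_t$ of the geodesic flow is a diffeomorphism of $TM\setminus\{0\}$, and pullback commutes with the wedge product, so that
$$
\phi^*_t\bigl[(d\omega)^n\bigr] = \bigl(\phi^*_t\, d\omega\bigr)^{\!n}.
$$
Substituting the already-established identity $\phi^*_t\, d\omega = d\omega$ gives $\phi^*_t[(d\omega)^n] = (d\omega)^n$, and since the scalar factor $(-1)^{n(n+1)/2}/n!$ is constant we conclude $\phi^*_t\, dV_\omega = dV_\omega$, which is the asserted invariance.

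For completeness I would re-derive the identity $\phi^*_t\, d\omega = d\omega$ from Lemma \ref{lem:form invariance}(2): applying the exterior derivative $d$ to that relation and using $d^2=0$ together with the commutation of $d$ with pullback yields $\frac{d}{dt}\,\phi^*_t(d\omega) = 0$, so $\phi^*_t(d\omega)$ is independent of $t$ and therefore equals its value $d\omega$ at $t=0$. I do not expect a genuine obstacle here; the pullback computation is purely formal once Lemma \ref{lem:form invariance} is in hand. The only point warranting a word of care is that the geodesic flow is a priori only locally defined, so the identity is first obtained on the domain of $\phi_t$; part (1) of Lemma \ref{lem:form invariance}, which keeps $F$ constant along the flow (hence leaves $SM$ invariant), is what allows the statement to be read equally on $SM$, matching the symplectic volume $\int_{SM}dV_\omega$ appearing in Proposition \ref{prop: finite HT vol and SM}.
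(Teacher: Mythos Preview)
Your proposal is correct and matches the paper's approach exactly: the paper's entire proof is the one-line remark that the result is ``trivial taking into account Lemma~\ref{lem:form invariance}'', and you have simply written out the (indeed trivial) details of why the invariance of $d\omega$ forces the invariance of $(d\omega)^n$ and hence of $dV_\omega$. There is nothing to add or correct.
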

     
     The proof is trivial taking into account  Lemma \ref{lem:form invariance}.
%%%%%%%%%%%%%%%%%%%%%%%%%%%%%%%

%%%%%%%%%%%%%%%%%%%%%%%%%%%%
\section{The proof of Theorem 1.1}\label{sec: proof Thm 1.1}
%Here are some key lemmas.

In the following, let $(M,F)$ be a non-compact (forward) complete Finsler manifold with bounded Holmes-Thompson volume, and let $f:(M,F)\to \R$ be a convex function on $M$. We denote again by $\phi$ the geodesic flow of $F$ on $SM$.

%[Key Lemma]\ M$ satisfying condition \eqref{unif recurr cond} for any neighborhood $U\subset SM$ of $u$ are called {\it uniformly recurrent} (see \cite{BBE}).
	
%	The fact that the set of uniformly recurrent vectors is of full measure follows from the famous Birkhoff ergodic theorem (see \cite{BBE} and references within).
	
	%Finally, observe that
	Taking into account that a convex function cannot be bounded, from the convexity of $f$ it is elementary to see that
	
	\begin{lemma}\label{lem: Key lemma}
 If  $\gamma:[0,\infty)\to M$ is any $F$-geodesic
on $M$ such that $\lim_{i\to \infty}\gamma(t_i)=\gamma(0)$ for some divergent numerical sequence $\{t_i\}$, $\lim_{i\to \infty}t_i=\infty$, then $f\circ \gamma:[0,\infty)\to \R$ must be constant.
\end{lemma}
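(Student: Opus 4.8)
The plan is to reduce everything to the elementary theory of convex functions of one real variable. Set $\varphi := f\circ\gamma \colon [0,\infty)\to\R$. By the very definition of a convex function on $(M,F)$, the restriction of $f$ to the forward geodesic $\gamma$ makes $\varphi$ a convex function on $[0,\infty)$. Since convex functions on a Finsler manifold are continuous (indeed locally Lipschitz), the hypothesis $\gamma(t_i)\to\gamma(0)$ yields $\varphi(t_i)=f(\gamma(t_i))\to f(\gamma(0))=\varphi(0)$. In particular the sequence $\{\varphi(t_i)\}$ is bounded even though $t_i\to\infty$.

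Next I would exploit that the right derivative $\varphi'_+$ of a convex function on $[0,\infty)$ exists at every point and is non-decreasing. This gives a dichotomy. If $\varphi'_+(t_0)>0$ for some $t_0$, then $\varphi'_+(t)\ge\varphi'_+(t_0)>0$ for all $t\ge t_0$, whence $\varphi(t)\ge \varphi(t_0)+\varphi'_+(t_0)(t-t_0)\to+\infty$ as $t\to\infty$. Evaluating along $t_i$ contradicts the boundedness of $\{\varphi(t_i)\}$ established above; this is precisely the step where the principle that ``a convex function cannot be bounded'' is invoked. Hence $\varphi'_+(t)\le 0$ for all $t\ge 0$, i.e. $\varphi$ is non-increasing on $[0,\infty)$.

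Finally, a non-increasing function attains its supremum at $0$ and satisfies $\lim_{t\to\infty}\varphi(t)=\inf_{t\ge 0}\varphi(t)$. Since $t_i\to\infty$, monotonicity forces $\varphi(t_i)\to\inf_{t\ge0}\varphi(t)$, while we already know $\varphi(t_i)\to\varphi(0)=\sup_{t\ge0}\varphi(t)$. Therefore $\inf_{t\ge0}\varphi=\sup_{t\ge0}\varphi$, so $\varphi$ is constant, which is the claim.

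I expect the only genuinely delicate point to be the justification that $\varphi(t_i)\to\varphi(0)$, which rests on the continuity of the convex function $f$; the convexity/monotonicity dichotomy and the squeeze at the end are routine one-variable facts. One should also note that, because $\gamma$ is used only in its forward direction on $[0,\infty)$, the non-reversibility of Finsler geodesics and the mere forward completeness of $(M,F)$ cause no difficulty in this argument.
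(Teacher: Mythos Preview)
Your argument is correct and is precisely the elementary one-variable convexity argument the paper alludes to when it writes ``taking into account that a convex function cannot be bounded, from the convexity of $f$ it is elementary to see that'' the lemma holds. You have simply supplied the details the paper omits; there is no substantive difference in approach.
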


Moreover, we have

\begin{lemma}\label{lem: cyclic lim}
	For any open set $U\subset SM$, there is an infinite sequence $t_i$, $\lim_{i\to \infty}t_i=\infty$ such that
	$$
	\phi_{t_i}(U)\cap U\neq \emptyset,\quad \textrm{ for all } t_i,
	$$
	where $\phi_t$ is the one parameter group generated by the geodesic flow of $(M,g)$.
\end{lemma}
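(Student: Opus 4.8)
The plan is to recognize Lemma~\ref{lem: cyclic lim} as a recurrence statement for the geodesic flow viewed as a measure-preserving dynamical system on a space of finite total volume, and then to deduce it by a pigeonhole argument on the forward translates of $U$ (this is the elementary core of the Poincar\'e recurrence phenomenon). Two ingredients from the previous sections supply the dynamical setup. First, by Lemma~\ref{lem:form invariance}(1) the function $F$ is constant along the flow, so the geodesic flow $\phi_t$ preserves the indicatrix $SM=\{F=1\}$; thus $\phi_t\colon SM\to SM$ is well defined for all $t\ge 0$ by forward completeness. Second, by the Liouville-type theorem of Section~\ref{sec: Finslerian volumes} the volume form $dV_\omega$ is $\phi_t$-invariant, and by Proposition~\ref{prop: finite HT vol and SM} its total mass $\mu(SM):=\int_{SM}dV_\omega$ is finite. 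Writing $\mu$ for the measure induced by $dV_\omega$ on $SM$, we therefore have a finite measure that is invariant under each injective map $\phi_t$.

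Next I would record two elementary facts about $U$. Since $dV_\omega$ is a positive volume form and $U\subset SM$ is nonempty and open, $\mu(U)>0$; and since each $\phi_t$ preserves $\mu$, we have $\mu(\phi_t(U))=\mu(U)$ for every $t\ge 0$. The heart of the proof is then the following observation, carried out separately at every scale $N\in\N$. Consider the forward translates $\phi_{kN}(U)$ for $k=0,1,2,\dots$; each has measure $\mu(U)>0$. If they were pairwise disjoint their union would have infinite measure, contradicting $\mu(SM)<\infty$. Hence there exist integers $j<k$ with $\phi_{jN}(U)\cap\phi_{kN}(U)\neq\emptyset$. Choosing $p=\phi_{jN}(u)=\phi_{kN}(u')$ with $u,u'\in U$, and using the group law $\phi_{kN}=\phi_{jN}\circ\phi_{(k-j)N}$ together with the injectivity of $\phi_{jN}$ (which follows from uniqueness of solutions of the geodesic equation, so that no appeal to backward completeness is needed), we obtain $u=\phi_{(k-j)N}(u')$, that is $u\in U\cap\phi_{(k-j)N}(U)$. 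Thus $\phi_{s}(U)\cap U\neq\emptyset$ for the time $s=(k-j)N\ge N$.

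Finally I would assemble the sequence. The previous step produces, for each $N\in\N$, a positive time $s_N\ge N$ with $\phi_{s_N}(U)\cap U\neq\emptyset$. Consequently the set of return times is unbounded, and extracting a strictly increasing subsequence $t_i:=s_{N_i}$ yields $t_i\to\infty$ with $\phi_{t_i}(U)\cap U\neq\emptyset$ for all $i$, as required. The one genuinely delicate point in this scheme is securing the divergence $t_i\to\infty$ rather than merely a single recurrence time: a naive pigeonhole on the family $\phi_{k}(U)$ alone only guarantees some return, whereas running the argument on the sparse family $\phi_{kN}(U)$ for each fixed $N$ forces arbitrarily large return times. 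The remainder is purely measure-theoretic bookkeeping, namely confirming that $\mu(U)>0$ and that the forward-defined flow maps are injective and $\mu$-preserving.
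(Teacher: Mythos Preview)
Your argument is correct and follows essentially the same route as the paper: the paper's one-sentence justification (``otherwise there would be infinitely many pairwise disjoint open sets of equal measure, contradicting $\vol_\omega(SM)<\infty$'') is exactly the pigeonhole on forward translates that you spell out in detail. Your additional care in running the pigeonhole on the sparse family $\{\phi_{kN}(U)\}_k$ for each $N$ to force $t_i\to\infty$, and in justifying injectivity of $\phi_t$ via uniqueness for the geodesic ODE, simply makes explicit what the paper leaves to the reader.
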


Indeed, if we assume the contrary, then there are infinitely many pairwise disjoint open sets with equal measure, which contradicts the fact that $SM$ has finite symplectic volume.

\begin{lemma}\label{lem: L dense set}
	The set of points
	$$
	L:=\{u\in SM\ :\ \lim_{t_i\to \infty}\phi_{t_i}(u)=u,\textrm{ for some sequence } t_i\to \infty
	\}
	$$
	is dense in SM.
\end{lemma}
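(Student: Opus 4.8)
The plan is to prove Lemma~\ref{lem: L dense set} as an application of the Poincar\'e recurrence theorem, using the fact established in Proposition~\ref{prop: finite HT vol and SM} that $SM$ has finite total measure $\vol_\omega(SM)$ with respect to the volume form $dV_\omega$, together with the Liouville-type invariance of $dV_\omega$ under the geodesic flow $\phi_t$. The key observation is that the set $L$ is precisely the set of recurrent points of the flow $\phi_t$ on the measure space $(SM, dV_\omega)$, so the problem reduces to showing that the recurrent points are dense.

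First I would set up the measure-theoretic framework. Since $\vol_\omega(SM)$ is finite and $dV_\omega$ is $\phi_t$-invariant, the geodesic flow is a measure-preserving flow on a finite measure space. The Poincar\'e recurrence theorem then guarantees that for any measurable set $A$ of positive measure, almost every point of $A$ returns to $A$ infinitely often under the flow. To pass from ``almost every point returns'' to the density statement about $L$, I would argue as follows. Fix any point $u_0\in SM$ and any neighborhood basis of open sets shrinking to $u_0$; it suffices to show that every nonempty open set $U\subset SM$ contains a point of $L$. Cover $SM$ by a countable collection $\{U_k\}$ of open sets of positive, arbitrarily small diameter (using the separability of $SM$), apply Poincar\'e recurrence to each $U_k$ to obtain a full-measure set $R_k\subset U_k$ of points returning to $U_k$ infinitely often, and intersect appropriately so that a recurrent point actually returns arbitrarily close to itself rather than merely to the set $U_k$.

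The main technical step, and the place I expect the genuine work to lie, is upgrading set-recurrence (the point $u$ returns to the fixed open set $U_k$) to \emph{self-recurrence} (the orbit $\phi_{t_i}(u)$ converges to $u$ itself). The standard device is to take a nested sequence of finer and finer measurable covers: for each integer $m$, take a finite or countable partition $\mathcal{P}_m$ of $SM$ into pieces of diameter less than $1/m$, apply recurrence to each piece of positive measure, and let $R$ be the intersection over all $m$ of the corresponding full-measure recurrent sets. Because a countable intersection of full-measure sets again has full measure, $R$ has full measure in $SM$. A point $u\in R$ then returns infinitely often to the piece of $\mathcal{P}_m$ containing it, for every $m$; since these pieces have diameter tending to $0$, this forces a sequence $t_i\to\infty$ with $\phi_{t_i}(u)\to u$, so $R\subset L$. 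A full-measure subset of $SM$ is dense (its complement, having measure zero, can contain no nonempty open set), and therefore $L\supset R$ is dense in $SM$, which is exactly the claim.

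I would close by noting two points of care. First, the geodesic flow must be genuinely defined for all time on the relevant set, which is ensured by forward completeness of $(M,F)$; one restricts attention to the forward-complete directions so that $\phi_t(u)$ exists for all $t\ge 0$, and Poincar\'e recurrence is applied to the forward flow. Second, the times $t_i$ produced must diverge: this follows automatically because a point returning to a neighborhood of diameter $1/m$ infinitely often under a flow (not merely a single map) yields return times that cannot accumulate at a finite value unless $u$ is a fixed point, and in all cases one can extract a subsequence with $t_i\to\infty$. These observations require only elementary measure theory once the invariance of $dV_\omega$ and the finiteness of $\vol_\omega(SM)$ are in hand.
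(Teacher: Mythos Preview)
Your proposal is correct and follows essentially the same approach as the paper: invoke Poincar\'e recurrence on the finite-measure, flow-invariant space $(SM, dV_\omega)$ to conclude that recurrent points have full measure, and then observe that a full-measure set is dense. You supply considerably more detail than the paper (which simply cites the recurrence theorem and the full-measure-implies-dense implication), in particular the passage from set-recurrence to self-recurrence via refining covers and the care about forward completeness and divergence of the $t_i$, but the underlying strategy is identical.
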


\begin{proof}
%We will use a piece of ergodic theory to prove this lemma. More precisely,
 The result follows from the more general Poincare recurrence theorem (\cite{Po}), that is,  {\it the set of recurrent points, of a measure preserving flow on a measure space with bounded measure, is a full measure set}.

%Indeed, if we consider $SM$ endowed with the standard Riemannian measure,
%it is easy to see that the set of unit vectors $u\in SM$ that satisfy
%\begin{equation}\label{unif recurr cond}
%\liminf_{t\to \infty} \frac{1}{t}\int_0^t \chi_U(\phi_tu)dt>0
%\end{equation}
%for any neighborhood $U$ of $u$ in $SM$, has full measure in $SM$, where $\chi_U$ %is the characteristic function of $U$. The vectors $u\in SM$ satisfying condition %\eqref{unif recurr cond} are called {\it uniformly recurrent} (see \cite{BBE}).
%Taking into account that $SM$ has finite symplectic volume (see Proposition \ref{prop: finite HT vol and SM}) and that the this volume is invariant under the geodesics flow (see Section 3), the fact that the set of uniformly recurrent vectors is of full measure follows from the famous Birkhoff Ergodic Theorem (see \cite{BBE} and references within).

Finally, observe that, being of full measure, the set of recurrent vectors must be in fact dense subset of $SM$. The proof is complete.
	$\qedd$
\end{proof}

\begin{remark}
In the proof above we have used the fact that a full measure subset $X$, of a space $E$ with measure, is dense in $E$. Observe that the inverse is not true because one can easily construct examples of dense subsets that are not of full measure.
  \end{remark}

Now the main theorem can be proved.

\begin{proof}[Proof of the Theorem \ref{Thm: Main Theorem 1}]
	Consider any point $u=(p,v)\in SM$. Since $L$ is dense (see Lemma \ref{lem: L dense set}), there always exists a sequence of points $u_i\in SM$ converging to $u$, i.e. $\lim_{i\to \infty}u_i=u$.
	
	Let $\gamma_u$ and $\gamma_{u_i}$ be the geodesics on $(M,g)$ determined by $u$ and $u_i$.
	
	Observe that Lemma \ref{lem: Key lemma} implies that $f\circ\gamma_{u_i}$ must be constant for any $i$. By continuity it follows that $f\circ\gamma$ must also be constant.
	
	Therefore, $f$ is locally constant, thus must be constant on $M$.
	$\qedd$
\end{proof}

%%%%%%%%%%%%%%%%%%%%%%%%%%%
%\section{The convexity of the Busemann function}
\section{Corollaries}\label{sec:Corollaries}

Recall that a function $f:(M,F)\to \R$ defined on a non-compact (forward) complete Finsler manifold is called convex if $f\circ\gamma:[0,1]\to \R$ is a convex function in the usual sense, for any $\gamma:[0,1]\to M$ Finsler geodesic. To be non-compact is a necessary condition for the existence of non-trivial convex functions. Indeed, it is trivial to see that if $M$ is compact, then $f$ must be bounded and hence constant.

Let $(M,F)$ be a forward complete boundaryless Finsler manifold. A unit speed globally minimizing geodesic $\gamma:[0,\infty)\to
M$ is called a {\it (forward) ray}. A ray $\gamma$ is called
{\it maximal} if it is not a proper sub-ray of another ray, i.e.
for any $\ve>0$ its extension to $[-\ve,\infty)$ is not a ray
anymore. Moreover, let us assume that $(M,F)$ is bi-complete, i.e. forward and backward complete.
A Finslerian unit speed globally minimizing geodesic $\gamma:\R\to
M$ is called a {\it  straight line}. We point out that, even though for defining rays and straight lines  we not need any completeness hypothesis, without completeness, introducing rays and straight lines would be meaningless.

Let $(M,F)$ be a forward complete boundaryless non-compact Finsler manifold (see \cite{BCS}, \cite{S} for details on the completeness of Finsler manifolds).
In Riemannian geometry, the forward and backward completeness are equivalent, hence the words ``forward'' and ``backward'' are superfluous, but in Finsler geometry these are not equivalent anymore.

\begin{definition}\label{def:Bus_fct}
	If $\gamma:[0,\infty)\to M$ is a ray in a forward complete boundaryless non-compact Finsler manifold $(M,F)$, then the function
	\begin{equation}\label{Bus_fnc_def}
	\bb_\gamma:M\to\R,\quad
	\bb_\gamma(x):=\lim_{t\to\infty}\{t-d(x,\gamma(t))\}
	\end{equation}
	is called {\it the Busemann function with respect to
		$\gamma$}, where $d$ is the Finsler distance function.
\end{definition}

%\quad In the present paper we are going to characterize the co-rays in terms of the Busemann function by studying the differentiability of the Busemann function.
% as well as the structure of the copointsset of asymptotic rays.
See  \cite{Oh} and \cite{Sa} for basic results on Busemann function for Finsler manifolds.

%was introduced and partially studied by Egloff \cite{Eg} and
%more recently by \cite{Oh}.

%As pointed out by Busemann himself, the theory of co-rays
%is dominated by a simple function called nowadays Busemann
%function (\cite{Bu1}).

%\begin{remark}[MAYBE DELETE BECAUSE NOTHING NEW]
%	\begin{enumerate}
%		\item The limit in \eqref{Bus_fnc_def} always exists because the
%		function
		%\begin{equation}\label{under_lim}
%		$t\mapsto t-d(x,\gamma(t))$
		%\end{equation}
%		is monotone nondecreasing and bounded above by $d(\gamma(0),x)$.
		%\end{remark}
		
		%\begin{remark}[\cite{Bu1}]
%		\item
%		Obviously $\bb_\gamma(\gamma(t))=t$, for all $t\geq 0$.
%		Moreover, if $\gamma_0$ is a sub-ray of the ray $\gamma$, then $\bb_{\gamma_0}(x)=\bb_\gamma(x)-t_0$ for any point $x\in M$, where $t_0\geq 0$ is the parameter value on $\gamma$ such that  $\gamma_0(0)=\gamma(t_0)$.
		
		%Indeed, if we denote by $\tau:=t-t_0$ the parameter on $\gamma_0$, we have
		%\begin{equation*}
		%\begin{split}
		%\bb_{\gamma_0}(p)& =\lim_{\tau\to \infty}\{\tau-d(p,\gamma_0(\tau))\}=
		%\lim_{\tau=t-t_0\to \infty}\{t-t_0-d(p,\gamma(t))\}\\
%		%& =
		%\lim_{t\to \infty}\{t-d(p,\gamma(t))\}-t_0=\bb_\gamma(p)-t_0.
		%\end{split}
		%\end{equation*}
%	\end{enumerate}
%\end{remark}

It is known that the Busemann function of a non-compact complete Riemannian manifold of non-negative sectional curvature is convex. However, in the Finslerian case, due to the different behaviour of geodesics and the dependence of the metric on direction, bounded conditions on the flag curvature are not enough to assure the convexity of the Busemann function $\bb_\gamma$.

The case of Berwald spaces is well understood. Indeed, the Busemann function of any Berwald space of non-negative flag curvature is convex (see \cite{Oh2}, \cite{Oh}, \cite{Kell}).

From our Main Theorem it follows
\begin{corollary}\label{cor: Berwald vol infinite}
The Holmes-Thompson volume of a Berwald space of non-negative flag curvature is infinite.
\end{corollary}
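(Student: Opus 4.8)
The plan is to argue by contradiction, combining the convexity of the Busemann function on Berwald spaces of non-negative flag curvature (recalled just above) with our Main Theorem \ref{Thm: Main Theorem 1}. Throughout, $(M,F)$ is understood to be forward complete, boundaryless and non-compact, as in the present section; for a compact $M$ there is nothing to prove, since any convex function would then be bounded, hence constant. So I would begin by assuming, for contradiction, that the Holmes-Thompson volume $\vol_{HT}(M,F)$ is finite.

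First I would produce a ray. Since $M$ is non-compact and forward complete, the Finslerian Hopf-Rinow theorem guarantees that forward closed and bounded sets are compact, so I can choose a divergent sequence $q_i\to\infty$ and join a fixed base point $p$ to each $q_i$ by a forward minimizing unit-speed geodesic $\sigma_i$. The initial velocities $\sigma_i'(0)$ lie in the compact indicatrix $S_pM$, hence after passing to a subsequence they converge, and the geodesic issuing from the limiting direction is a forward ray $\gamma:[0,\infty)\to M$. Associated to $\gamma$ is the Busemann function $\bb_\gamma$ of Definition \ref{def:Bus_fct}, whose defining limit exists because $t\mapsto t-d(x,\gamma(t))$ is non-decreasing (forward triangle inequality) and bounded above by $d(p,x)$.

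Next I would invoke the two decisive facts. By the cited results (\cite{Oh2}, \cite{Oh}, \cite{Kell}) the Busemann function $\bb_\gamma$ of a Berwald space of non-negative flag curvature is convex. On the other hand $\bb_\gamma$ is non-trivial: along the ray itself one has $d(\gamma(s),\gamma(t))=t-s$ for $t\ge s$, since $\gamma$ is globally minimizing, whence $\bb_\gamma(\gamma(s))=\lim_{t\to\infty}\{t-(t-s)\}=s$, which is unbounded. Thus $\bb_\gamma$ is a non-constant convex function on $(M,F)$. But the finiteness of $\vol_{HT}(M,F)$ forces, by Theorem \ref{Thm: Main Theorem 1}, every convex function to be constant, a contradiction. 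Hence $\vol_{HT}(M,F)=\infty$.

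The genuinely substantive input is external and already granted: the convexity of the Busemann function on Berwald spaces of non-negative flag curvature, which is delicate in the Finsler setting precisely because curvature bounds alone do not control the behaviour of geodesics, as emphasized in the text preceding the corollary. Given that convexity, the only points requiring care are the standard existence of a ray and the well-definedness of $\bb_\gamma$ under mere forward completeness; both are routine consequences of Hopf-Rinow and the triangle inequality, so I expect no real obstacle once those ingredients are in place.
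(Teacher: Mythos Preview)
Your proposal is correct and follows exactly the paper's line: the paper simply writes ``From our Main Theorem it follows'' after recalling that the Busemann function of a Berwald space of non-negative flag curvature is convex, and your argument spells out the details (existence of a ray via Hopf--Rinow, well-definedness and non-triviality of $\bb_\gamma$) that the paper leaves implicit. Nothing to add.
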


\begin{remark}
	If $(M,F)$ is a Berwald space of non-negative flag curvature, then Corollary \ref{cor: Berwald vol infinite} can be also proved exactly as in the Riemannian case (see \cite{Wu} for an elementary  proof of the Riemannian case). Indeed, the specific features of Berwald spaces, like the reversibility of geodesics, the vanishing of the tangent curvature and the formula for the second variation of the arc length (see \cite{BCS} or \cite{S}), make the Riemannian arguments working in the Berwald case.
	
\end{remark}

\begin{remark}
Let us also observe that in the Berwald case, the volume conditions obtained above also holds good for the Busemann-Hausdorff volume. Even thought we have pointed out that the Busemann-Hausdorff volume is not quite suitable for the study of arbitrary Finsler manifolds, in the Berwald case it has some special properties that make it more usefull that in the general case. Indeed, if $(M,F)$ is a Berwald space, then by averaging over the indicatrices, one can obtain a Riemannian metric (actually several Riemannian metrics depending on the averaging formula, see \cite{Cr}) whose volume is proportional with the Busemann-Hausdorff volume. The details follow easily.
\end{remark}

Observe that  the papers \cite{Oh2}, \cite{Oh}, \cite{Kell}  link the notion of uniform smoothness with the convexity of Busemann function. Indeed, the essential result is that if $(M,F)$ is a non-compact connected (forward) complet Finsler manifold such that
\begin{enumerate}
\item it is of non-negative flag curvature,
\item for all $x\in M$, the Finsler-Minkowski  norms $F_x$ are 2-uniformly smooth,
% for  some constant $S\geq 1$,
\end{enumerate}
then for any reversible ray $\gamma:[0,\infty)\to M$, the Busemann function $\bb_\gamma$ is convex (see \cite{Kell} Lemma 3.11, Corollary 3.12).

By combinig this result with Theorem \ref{Thm: Main Theorem 1} it results
\begin{corollary}\label{cor:general case of infinte volume}
 The Finsler manifolds with the properties 1, 2 above must have infinite Holmes-Thompson volume.
\end{corollary}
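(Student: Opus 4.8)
The plan is to argue by contradiction, combining the convexity of the Busemann function (the quoted result from \cite{Kell}) with our Main Theorem. Suppose, contrary to the claim, that a non-compact connected (forward) complete Finsler manifold $(M,F)$ satisfying properties 1 and 2 had \emph{finite} Holmes-Thompson volume. I will then produce a non-constant convex function on $M$, which directly contradicts Theorem \ref{Thm: Main Theorem 1}.

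First I would secure the object to which the hypothesis applies, namely a reversible ray. Since $M$ is non-compact and forward complete, a standard limiting argument produces a ray: fix $p\in M$, take points $q_i$ escaping to infinity, join $p$ to $q_i$ by unit-speed minimizing geodesics, and extract (via forward completeness and the Arzel\`a--Ascoli theorem) a convergent subsequence whose limit $\gamma:[0,\infty)\to M$ is a forward ray. The delicate point is that the result of \cite{Kell} requires $\gamma$ to be \emph{reversible}; I would therefore argue that under the present hypotheses such a reversible ray is available, this being the only genuine subtlety in the argument.

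Granting a reversible ray $\gamma$, its Busemann function $\bb_\gamma$ of Definition \ref{def:Bus_fct} is convex by the essential result recalled above (\cite{Kell}, Lemma 3.11 and Corollary 3.12). It then remains to check that $\bb_\gamma$ is non-constant, and this is immediate from its values along $\gamma$ itself: for $t\ge s$ one has $d(\gamma(s),\gamma(t))=t-s$ because $\gamma$ is globally minimizing, so that
\begin{equation*}
\bb_\gamma(\gamma(s))=\lim_{t\to\infty}\bigl(t-d(\gamma(s),\gamma(t))\bigr)=\lim_{t\to\infty}\bigl(t-(t-s)\bigr)=s.
\end{equation*}
Thus $\bb_\gamma$ is unbounded along $\gamma$, hence in particular non-constant on $M$.

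Finally, I would invoke Theorem \ref{Thm: Main Theorem 1}: under the assumption of finite Holmes-Thompson volume, every convex function on the forward complete non-compact manifold $(M,F)$ must be constant. Since $\bb_\gamma$ is a convex, non-constant function, this is a contradiction, and therefore the Holmes-Thompson volume of $(M,F)$ cannot be finite, proving the corollary. As indicated, the main obstacle is not the contradiction itself --- which is short --- but ensuring the existence of the reversible ray needed to apply the convexity result of \cite{Kell}; verifying that the Busemann function is non-constant and applying Theorem \ref{Thm: Main Theorem 1} are then routine.
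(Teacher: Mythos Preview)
Your proposal is correct and follows essentially the same approach as the paper, which simply states that the corollary results from combining the cited convexity result of \cite{Kell} with Theorem \ref{Thm: Main Theorem 1}; you have merely made explicit the two details the paper leaves implicit, namely the existence of a (reversible) ray and the non-constancy of $\bb_\gamma$. Your flag that securing a \emph{reversible} ray is the only genuine subtlety is well taken, and in fact the paper does not address this point either.
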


%However, the conditions above are not always easy to be checked. In the following %we will give a simple method to construct non-compact Finsler manifolds whose %Busemann function is convex starting from non-compact Riemannian manifolds of %non-negative sectional curvature $(M,\alpha)$.

%In this case, the famous Soul Theorem of Cheeger-Gromoll (\cite{CG}) states that t%here exists a totally geodesic compact submanifold $S^k$ of $M$, called the {\it s%oul} of $M$, such that $M$ is diffeomorphic to the normal bundle of $S^k$. In %particular, it is known that every vector bundle over $\Sph^n$, for any $n=1,2,3,\dots$, is a homogeneous vector bundle and hence carries a non-negative curvature Riemannian metric $\alpha$.

%%%%%%%%%%%%%%%%%%%%%%%%%%%%%%%%%%%%%%%%%%%%%%%%%%%%%%%%%%%%%
%%%%%%%%%%%%%%%%%%%%%%%%%%%%%%%%%%%%%%%%%%%%%%%%%%%%%%%%%%%%%%%
%------------------------------------

\bigskip

S. V. SABAU

%\bigskip

\noindent School of Biological Sciences,
Department of Biology,\\
Tokai University,
Sapporo 005\,--\,8600,
Japan

\medskip
{\tt sorin@tokai.ac.jp}

\bigskip

P. CHANSANGIAM

%\bigskip

\noindent Faculty of Science,
Department of Mathematics,\\
King Mongkut's Institute of Technology Ladkrabang,
Bangkok 10520,
Thailand

\medskip
{\tt pattrawut.ch@kmitl.ac.th}
\end{document}